\documentclass[a4paper, 12pt]{article}
\usepackage{graphicx} 
\usepackage{bussproofs}
\usepackage[utf8]{inputenc}
\usepackage[english]{babel}
\usepackage{authblk}
\usepackage{amsmath}

\usepackage{ragged2e}

\usepackage{hyperref}
\usepackage{multicol}

\usepackage{xcolor}
\usepackage{hyperref}

\usepackage{setspace}
\usepackage{amssymb} 
\usepackage{amsthm} 
\usepackage{tikz}
\usepackage{adjustbox}
\usepackage{multicol}
\usepackage{proof}
\usepackage[numbers]{natbib}

\usepackage{lineno}

\theoremstyle{definition}
\newtheorem{defn}{Definition}[section]
\newtheorem{teom}[defn]{Theorem}

\newtheorem{prop}[defn]{Proposition}
\newtheorem{rem}[defn]{Remark}

\newtheorem{exem}[defn]{Example}

\usepackage[shortlabels]{enumitem}

\label{xEXPRESSIONS}

\label{xLOGICS}{}

\newcommand{\lfis}{\textit{LFI}s}
\newcommand{\lfi}{\textit{LFI}}

\newcommand{\letfp}{$LET^{+}_{F}$}
 
\newcommand{\letfm}{\ensuremath{{LET^{-}_{F}}}} 
\newcommand{\letkp}{$LET^{+}_{K}$}
\newcommand{\letf}{$LET_{F}$}

\newcommand{\letj}{$LET_{J}$}
\newcommand{\fde}{$FDE$}
\newcommand{\nel}{\textit{N4}}
\newcommand{\lets}{\textit{LET}s}

\newcommand{\lfiu}{\textit{LFIU}}

\newcommand{\lfus}{\textit{LFU}s}

\newcommand{\lz}{\ensuremath{\mathcal{L}_0}}

\newcommand{\nfm}{\ensuremath{N_{f^-}}}

\newcommand{\gfm}{\ensuremath{G\mathit{1}_{f^-}}}

\newcommand{\gtfm}{\ensuremath{G\mathit{3}_{f^-}}}

\newcommand{\gletj}{\ensuremath{GLET_J}}

\newcommand{\hf}{$\mathcal{H}$-formula}

\label{xCOLORS} %

\label{xSYMBOLS}

\newcommand{\con}{\ensuremath{{\circ}}}

\newcommand{\defi}{\stackrel{\text{\tiny def}}{=} }

\label{xGENERAL COMMANDS}%

\newcommand{\mh}{\noindent}

\newcommand{\m}{\vspace{1mm}}
\newcommand{\mm}{\vspace{2mm}}
\newcommand{\mmm}{\vspace{3mm}}

\newcommand{\setl}{\setlength\itemsep{-0.2em}}

\newcommand{\vv}{\vspace{-1mm}}

\newcommand{\bqu}{\begin{quote} \small}
\newcommand{\equ}{\end{quote} \normalsize}
\newcommand{\enr}{\begin{enumerate}[label={(\arabic*)}, resume]}
\newcommand{\eenr}{\end{enumerate}}

\usepackage{graphicx}
\makeatletter
\providecommand*{\ndashv}{%
  \mathrel{%
    \mathpalette\@ndashv\nvdash
  }%
}
\newcommand*{\@ndashv}[2]{%
  \reflectbox{$\m@th#1#2$}%
}
\makeatother

\title{\bf A Kleene-inspired sequent system for a minimal logic of evidence and truth}  

\author{N. Grandinetti$^1$,  A. Rodrigues$^2$ \\ 
  Federal University of Minas Gerais  \\
 $^1$ ninogl@ufmg.br \\
$^2$   abilio@ufmg.br \\ 
} 

\begin{document} 

\maketitle

\mm 
\begin{center}
\emph{This paper is a small tribute to Professor Luiz Carlos Pereira,  a great logician and  Flamengo supporter -- a sure sign of wisdom and good taste -- in recognition of his intellectual influence and a long-standing friendship with one of the authors, dating back to 2002.}
\end{center}

\m\m 

\begin{abstract}
\mh This paper investigates the proof theory of the logic of evidence and truth \letfm, with the aim of obtaining a sequent calculus consisting only of logical rules, suitable for a simple bottom-up decision procedure. We begin with \gfm, a sequent system containing structural rules, which makes explicit the main difficulties in obtaining a fully logical and decidable calculus for \letfm. We then introduce \gtfm, a \textit{G3}-style system obtained from \gfm\ by applying Kleene’s trick, which absorbs contraction into the logical rules by adding the principal formula of the rule to its premises. 
\end{abstract}


\onehalfspacing

\section{Introduction}

Logics of evidence and truth (\lets) form a family of paraconsistent and paracomplete logics. 
They extend  the logic of first-degree entailment (\fde), also known as Belnap–Dunn four-valued logic  (e.g. \cite{belnap.1977.how,dunn76}), with a unary operator \con\  designed to recover classical negation  to formulas within its scope. 
The sentential logic \letfm\ is a minimal \textit{LET}, obtained by adding \con\ and its inference rules to \fde.


This paper investigates the proof theory of \letfm. More precisely, our aim is to present a sequent calculus for \letfm\ with purely logical rules, and to use it to obtain a simple bottom-up decision procedure.
We do so by first introducing the system \gfm, a  sequent system which contains the  structural rules and highlights the problems that must be addressed in order to get to our aimed fully logical and decidable sequent system for \letfm. 
The system \gfm\ is then modified into a system that we call \gtfm, a \textit{G3}-style system inspired by Kleene \cite[\S80, pp.~479ff.]{Kleene1952}. \gtfm\ is obtained from \gfm\ by applying the so-called Kleene's trick, which consists in duplicating the principal formula in the premises so that contraction is absorbed into the logical rules.

The remainder of this text is structured as follows.
Section~\ref{sec.on.lets} provides a brief presentation of   \textit{LET}s, their motivations, and their historical background. Readers interested primarily in proof theory may skip this section and proceed directly to 
 Section~\ref{sec.gfm}, which   
 begins by presenting a natural deduction system for \letfm, called \nfm,  together with a sound and complete bivalued semantics. \nfm\  is the sentential fragment of a first-order version of \letfm, investigated in \cite{Rodrigues2022}.
 In Section~\ref{sec.gfm}   we also  introduce the sequent system \gfm, and show, based on results from \cite{Coniglio2025}, that the cut rule can be eliminated from deductions in \gfm.
In Section~\ref{sec.gtfm} we introduce the cut-free system \gtfm, and show that all structural rules are admissible in it. 
In this Section we also   establish the main result of this paper, namely,  the decidability of \letfm\ by means of a bottom-up decision method for \gtfm. 
Finally, in Section~\ref{sec.final.rem}, we   outline directions for future research.

\section{On the logics of evidence and truth}  \label{sec.on.lets}

We begin by formally characterizing logics of formal inconsistency (\lfis) and logics of formal undeterminedness 
(\lfus). Logics of evidence and truth are simultaneously \lfis\ and \lfus. 
   
 Let \textbf{L} be a Tarskian, compact, and structural logic 
 equipped with a negation $\neg$, a  standard disjunction $\lor$, and a unary connective    $\con$. 
Consider the following conditions: 
\begin{itemize}\setl  
       \item [(i)] $ A ,\neg A \nvdash B $ for some $ A $ and $ B $;
 \item [(ii)]    
       $\circ A , A  \nvdash  B $  and $\circ A , \neg  A  \nvdash  B $ for some $ A $ and $ B $; 
       \item [(iii)]  $\circ A , A ,\neg A \vdash B $ for every $ A $ and $ B $. \m 
       \item [(iv)] $\nvdash A  \lor \neg A $ for some $ A $;
       \item [(v)] 
       $\con A  \nvdash  A $ and 
       $\con A  \nvdash \neg  A $ for some $ A $; 
        \item [(vi)]  $\con A  \vdash  A  \lor \neg A $ for every $ A $. 
\end{itemize}

\mh 
 ${\bf L}$ is a {\it logic of formal inconsistency} (\lfi) if conditions (i)–(iii) hold, and a {\it logic of formal undeterminedness} (\textit{LFU}) if conditions (iv)–(vi) hold. Conditions~(ii) and (v) are needed to ensure that (iii) and (vi) hold in a non-trivial way.
    If all the above conditions hold, \textbf{L} is a
    \textit{logic of formal inconsistency and undeterminedness}, i.e. an \textit{LFIU}.  

The terminology \textit{LFI} and \textit{LFU} was introduced by Carnielli \cite{car.mar.2002.tax} and Marcos \cite{marcos2005}. 
The underlying ideas, however, can be traced back to da Costa's earlier work: 
\lfis\ further develop the $C_n$ hierarchy of paraconsistent logics introduced in the 1960s \cite{costa1963}, 
and the recovery of the law of excluded middle in a way analogous to explosion was already investigated in the 1980s 
\cite{daCosta1989,daCostaMarconi1986}.
\textit{LFI}s and \textit{LFU}s have been investigated by several authors, especially by those associated with the so-called Brazilian school of paraconsistency (see e.g.  \cite{car.con.2016.book,car.mar.2002.tax,car.con.mar.2007,marcos.tese,MortariRodriguesSeccoPimentelDottaviano2025}).
The idea of a logic of evidence and truth first appeared in \cite[Section~3.3.2]{kolkata}, where a minimal \textit{LFI} called $mbC$ was modified so as to be faithful to the intended interpretation in terms of conclusive and non-conclusive evidence. 
\lets, as well as \lfis\ and \lfus, thus, belong to the evolutionary line of paraconsistent and paracomplete logics initiated by da Costa’s pioneering work, which introduced systems with two notions of consequence: one sub-classical and one classical, the latter being recovered by means of a \con-style operator.

Let us now provide a more precise characterization of logics of evidence and truth.
They are \lfiu s that have \fde\ as a fragment, and they come with an intended intuitive interpretation in terms of evidence that may fail to be conclusive (see e.g. \cite{letj,Rodrigues2022,rod.car.llp}). In this setting, $\con A$ expresses that whatever evidence is available for $A$, whether positive or negative, is taken to be conclusive; when $\con A$ does not hold, the evidence is non-conclusive.
The intuitive interpretation assumes that conclusive evidence is governed by classical logic. 
Thus, a formula $\con A$ implies that classical reasoning holds for $A$, 
as well as for all formulas built from $A$ using the connectives $\land$, $\lor$, and $\neg$. 
In \citep{letf}, an alternative but closely related intuitive interpretation in terms of information 
has been proposed for \lets.   Accordingly, 
 a sentence $ \con A $ means that the information conveyed by $A$, positive or negative, is reliable, and 
 when $\con A$ does not hold, it means that there is no reliable information about $A$. 
This reading   appears as a natural development of the information-based interpretation of \fde, introduced by Belnap and Dunn in the 1970s (see e.g. \cite{belnap.1977.how,dunn76}).

The logic \letfm\ is a  minimal \textit{LET}, obtained by 
simply adding to \fde\   rules validating the inferences below:
\vv \begin{itemize}
 \setl 
\item[] (1) $ \con A,A,\neg A \vdash  B$ and  
\item[] (2) $ \con A \vdash A\lor\neg A$,  
\end{itemize} \vv 
\mh which recover classical  logic  for sentences in the scope of \con. 
\letfm\ is the $\to$-free fragment of the logic \letj,   introduced in \cite{letj} 
and further investigated in \cite{axioms,Rodrigues2022}. 
\letj\ is equipped with a constructive implication -- it is obtained by adding to Nelson's logic \nel\ \cite{almu.nel.84} 
the inferences (1) and (2) above. The proof theory of \letj\ was investigated in \cite{Rodrigues2022}, which provides the basis for some of the results established here. 
\letfm\ is also  the $\bullet$-free 
fragment of the logic \letf, investigated in \cite{letf} ($\bullet$ is a non-classicality operator: 
it has a deductive behavior dual to \con\ and $\bullet A$ means that there is no conclusive evidence about $A$ -- see \cite[Section~3.1]{letf}).
A first-order version of \letfm\ has been studied in \cite{Rodrigues2022}.

The notion of evidence for a sentence $A$ that underlies the intuitive interpretation of \lets\  can ben explained  as `reasons for believing in or accepting $A$'. These reasons may be non-conclusive or wrong,
 and so   evidence for $A$ does not imply the truth of $A$. There may be
simultaneous conflicting evidence for sentences  $A$ and $\neg A$,
  and  evidence for $A$ is meant to be objective in the sense that it is independent from
the belief of an agent in $A$. 
It must be observed that accepting a contradiction in a context of reasoning is not to be understood as a rational acceptance: the contradictory evidence is merely admitted, in the sense that one should be able to reason in its presence.
The interpretation in terms of information takes the definition of information as \textit{meaningful data} (see \cite{dunn2008,fetzer2004a}), which allows information to be false.  
A more detailed discussion of the notion of evidence and information can be found in \cite[Section~2]{rod.car.llp},  
and an exposition  that emphasizes the interpretation in terms of information can be found in \cite[Sections~2 and 3]{Rodrigues2022}  and \cite[Section~2]{qletf}.

\section{The logic \letfm\ and the   system \gfm} \label{sec.gfm}

Consider a denumerable set ${\mathcal{V}}$  of sentential letters and the set  of connectives  
$\mathcal{S}_0 = \{ \land, \lor, \neg,\con \}$. 
The language $\mathcal{L}_0$  is  the set of formulas  generated by 
${\mathcal{V}}$ over $\mathcal{S}_0$.    
Roman capitals $A, B, C,\dots$ will be used as metavariables for the formulas of $\mathcal{L}_0$, 
 while Greek capitals $\Gamma, \Delta, \Sigma, \dots$ will be used as metavariables for finite sets of formulas.


\begin{defn}  (The natural deduction system \nfm)   \label{def.nfm}

\m\mh 
The deductive system \nfm\ is defined over the language \lz\    
by the following rules: 

\m\mh 

 \begin{center}

\AxiomC{${\cal D}_1$}
\noLine
\UnaryInfC{$A$}
\AxiomC{${\cal D}_2$}
\noLine
\UnaryInfC{$B$}
\RightLabel{$\wedge I$}
\BinaryInfC{$A \wedge B$}
\DisplayProof \hspace{1cm}
\AxiomC{${\cal D}$}
\noLine
\UnaryInfC{$A\wedge B$}
\RightLabel{\rm $\wedge$E}
\UnaryInfC{$A$}
\DisplayProof 
\AxiomC{${\cal D}$}
\noLine
\UnaryInfC{$A\wedge B$}
\UnaryInfC{$B$}
\DisplayProof 

\mm 

\AxiomC{${\cal D}$}
\noLine
\UnaryInfC{$A$}
\RightLabel{\rm $\vee$I}
\UnaryInfC{$A\vee B$}
\DisplayProof
\AxiomC{${\cal D}$}
\noLine
\UnaryInfC{$B$}
\UnaryInfC{$A\vee B$}
\DisplayProof \hspace{1cm}
\AxiomC{}
\noLine
\UnaryInfC{${\cal D}_1$}
\noLine
\UnaryInfC{$A\vee B$}
\AxiomC{\rm[$A$]$^u$}
\noLine
\UnaryInfC{${\cal D}_2$}
\noLine
\UnaryInfC{$C$}
\AxiomC{\rm [$B$]$^v$}
\noLine
\UnaryInfC{${\cal D}_3$}
\noLine
\UnaryInfC{$C$}
\RightLabel{\rm $\vee$E,$u$,$v$}
\TrinaryInfC{$C$}
\DisplayProof 

\mm 

\AxiomC{${\cal D}$}
\noLine
\UnaryInfC{$\neg A$}
\RightLabel{\rm $\neg\wedge$I}
\UnaryInfC{$\neg(A\wedge B)$}
\DisplayProof
\AxiomC{${\cal D}$}
\noLine
\UnaryInfC{$\neg B$}
\UnaryInfC{$\neg(A\wedge B)$}
\DisplayProof \hspace{1cm}
\AxiomC{}
\noLine
\UnaryInfC{${\cal D}_1$}
\noLine
\UnaryInfC{$\neg(A\wedge B)$}
\AxiomC{\rm[$\neg A$]$^u$}
\noLine
\UnaryInfC{${\cal D}_2$}
\noLine
\UnaryInfC{$C$}
\AxiomC{\rm [$\neg B$]$^v$}
\noLine
\UnaryInfC{${\cal D}_3$}
\noLine
\UnaryInfC{$C$}
\RightLabel{\rm $\neg\wedge$E,$u$,$v$}
\TrinaryInfC{$C$}
\DisplayProof 

\mm 

\AxiomC{${\cal D}_1$}
\noLine
\UnaryInfC{$\neg A$}
\AxiomC{${\cal D}_2$}
\noLine
\UnaryInfC{$\neg B$}
\RightLabel{\rm $\neg\vee$I}
\BinaryInfC{$\neg(A \vee B)$}
\DisplayProof \hspace{1cm}
\AxiomC{${\cal D}$}
\noLine
\UnaryInfC{$\neg(A\vee B)$}
\RightLabel{\rm $\neg\vee$E}
\UnaryInfC{$\neg A$}
\DisplayProof 
\AxiomC{${\cal D}$}
\noLine
\UnaryInfC{$\neg(A\vee B)$}
\UnaryInfC{$\neg B$}
\DisplayProof

\mm

\AxiomC{${\cal D}$}
\noLine
\UnaryInfC{$A$}
\RightLabel{\rm $\neg\neg$I}
\UnaryInfC{$\neg \neg A$}
\DisplayProof \hspace{1cm}
\AxiomC{${\cal D}$}
\noLine
\UnaryInfC{$\neg\neg A$}
\RightLabel{\rm $\neg\neg$E}
\UnaryInfC{$A$}
\DisplayProof

 \mmm 

\AxiomC{${\cal D}_1$}
\noLine
\UnaryInfC{$\circ A$}
\AxiomC{${\cal D}_3$}
\noLine
\UnaryInfC{$\neg A$}
\AxiomC{${\cal D}_2$}
\noLine
\UnaryInfC{$A$}
\RightLabel{\rm $EXP^{\circ}$}
\TrinaryInfC{B}
\DisplayProof \hspace{2cm}
\AxiomC{}
\noLine
\UnaryInfC{${\cal D}_1$}
\noLine
\UnaryInfC{$\circ A$}
\AxiomC{\rm[$A$]$^u$}
\noLine
\UnaryInfC{${\cal D}_2$}
\noLine
\UnaryInfC{$B$}
\AxiomC{\rm [$\neg A$]$^v$}
\noLine
\UnaryInfC{${\cal D}_3$}
\noLine
\UnaryInfC{$B$}
\RightLabel{{$PEM^{\circ}$},$u$,$v$}
\TrinaryInfC{$B$}
\DisplayProof

 \end{center}

\m\mh 
The assumption classes [$A$]$^u$,  [$B$]$^v$, [$\neg A$]$^u$, [$\neg B$]$^v$ contain open assumptions of the deductions of the premises of the final inference, but are closed in the whole deduction. Also, we define major and minor premises as usual, except in the rule $EXP^{\circ}$, where there is no notion of major or minor premises.

The definition of a derivation of $A$ from $\Gamma$ in $\nfm$ is the usual one for natural
deduction systems (recall that we assume here that $\Gamma$ is a finite set).
It suffices to say that a derivation $\mathcal D$ is a tree of formulas
whose nodes are either elements of $\Gamma$ or are obtained from earlier nodes by applications of the rules above, and whose bottom node is its conclusion.
We write $\Gamma \vdash_{\nfm} A$ to mean that there exists a derivation in $\nfm$ from the premises
in $\Gamma$ with conclusion $A$, omitting the subscript when there is no risk of confusion.

 \end{defn}

\mm

\begin{defn} ({Valuation semantics for \letfm})  \label{def.val.smt.letfm}

\m 
\mh
A bivalued semantics for {\letfm} is given by the class of functions
$v:\lz \to \{0,1\}$ satisfying the following clauses:

\vv\vv\vv     \begin{itemize} \setl 
\item[v1.] $v( A  \land  B )=1$ iff $v( A )=1$ and $v( B )=1$,
\item[v2.] $v( A  \lor  B )=1$ iff $v( A )=1$ or $v( B )=1$,
\item[v3.] $v(\neg ( A  \land  B ))$ = 1 iff $v(\neg  A ) = 1$ or  $v(\neg  B ) = 1$,
\item[v4.] $v(\neg ( A  \lor  B )) = 1$ iff $v(\neg  A ) = 1$ and  $v(\neg  B ) = 1$,
\item[v5.] $v( A ) = 1$ iff $v(\neg\neg  A ) = 1$, 
\item[v6.] If $v(\con A) = 1$, then  $v(A) = 1$ if and only if $v(\neg A) = 0$.  
\end{itemize} 
 
 \vv\mh We say that a formula $A$ is a semantic consequence of $\Gamma$, written
$\Gamma \vDash_{\letfm} A$, if and only if for every valuation $v$, whenever
$v(B)=1$ for all $B \in \Gamma$, then $v(A)=1$. 
\end{defn}

As remarked in the Introduction, the logic \letfm\ is a fragment of both the sentential logics \letj\ and  \letf, 
introduced in \cite{letj} and \cite{letf} respectively. \letf\  extends \letfm\ with the operator $\bullet$, 
already mentioned in Section~\ref{sec.on.lets}, 
together with its corresponding rules. 
The semantics presented above is obtained simply by suppressing the clause for the $\bullet$ connective in the semantics of \letf\ \cite[Definition~26]{letf}. 
 A natural deduction system for \letfm\ is obtained just by dropping the $\bullet$-rules from the deductive system of 
 \letf\  \cite[Definition~23]{letf}. 
The system presented here differs from the $\bullet$-free system of \cite{letf} 
only with respect to the rule that recovers excluded middle, but both are equivalent. 
A system  identical to \nfm\  is the sentential fragment of the first-order version of  \letfm, 
investigated in \cite{Rodrigues2022}.  

\mm  \nfm\ is sound and complete with respect to the bivalued  semantics presented in Definition~\ref{def.val.smt.letfm}.

\begin{teom}  \label{th.sound.comp.nfm} \ 

\m\mh 
$\Gamma \vdash_{\nfm} A$ if and only if $\Gamma \vDash_{\letfm} A$

\begin{proof}
    Since the logic \letfm\ is contained in \letf, the soundness and completeness of the system \nfm\ with respect to the semantics above follow immediately from the corresponding soundness and completeness results for \letf\ \cite[Theorem~28]{letf}.
\end{proof}    
\end{teom}

\subsection{The sequent calculus \gfm}  \label{sec.subsection.gfm}

Below, we present the sequent system \gfm,  in which all structural rules are explicit. 
We discuss the features of this system and show its limitations with respect to our purposes. Later, 
in Section \ref{sec.gtfm}, we will introduce the system \gtfm, with no explicit structural rules.\footnote{In calling these systems \textit{G1} and \textit{G3}, we follow the terminology
introduced by Kleene \cite{Kleene1952} and later adopted in standard textbooks
on proof theory, such as those by Troelstra and Schwichtenberg
\cite{Troelstra2000} and by Negri and von Plato \cite{Negri2001}.
Here, we use the numeral following \textit{G} to indicate whether the system can dispense with \textit{all} structural rules: “1” indicates that it cannot, whereas “3” indicates that it can.}


 

\begin{defn} (The system \gfm)  \label{def.gfm}

\m\mh   
The system \gfm\ is defined over the language \lz. 
Greek capitals $\Gamma, \Delta, \Sigma$  now denote multisets of formulas of formulas of \lz, and \(A, B\) range over formulas of \lz.
The notions of principal, active and context formulas are understood in the standard way.

\mm\mh 
\textit{Axioms:}
\begin{center}
$\infer[Id] {A\Rightarrow A} {}$
\end{center}

\mh \textit{Structural rules:}
\begin{center}
$\infer[LW] {A, \Gamma \Rightarrow C} {\Gamma \Rightarrow C}$
\quad 
$\infer[Cut] {\Gamma^{'}, \Gamma \Rightarrow C} { \Gamma^{'} \Rightarrow A & A, \Gamma \Rightarrow C}$
\quad
$\infer[LC] {A, \Gamma \Rightarrow C} {A,A,\Gamma \Rightarrow C}$
\end{center}

\mh \textit{Logical rules:}
\begin{center}
$\infer[L\land] {A \land B,\Gamma \Rightarrow C} {A,B , \Gamma \Rightarrow C}$ 
\quad 
$\infer[R\land] {\Gamma \Rightarrow A \land B} { \Gamma\Rightarrow A   &   \Gamma\Rightarrow B}$

\

$\infer[L\lor] {A \lor B,\Gamma \Rightarrow C} {A,\Gamma \Rightarrow C & B,\Gamma \Rightarrow C}$
\quad 
$\infer[R\lor] {\Gamma \Rightarrow A \lor B } { \Gamma\Rightarrow A }$  $ \infer[] {\Gamma \Rightarrow A \lor B } { \Gamma\Rightarrow B}$

\

$\infer[L\neg\lor] {\neg(A \lor B),\Gamma \Rightarrow C} {\neg A, \neg B,\Gamma \Rightarrow C}$
\quad 
$\infer[R\neg\lor] {\Gamma \Rightarrow \neg(A \lor B)} { \Gamma\Rightarrow \neg A   &   \Gamma\Rightarrow \neg B}$

\

$\infer[L\neg\land] {\neg(A \land B),\Gamma \Rightarrow C} 
{\neg A,\Gamma \Rightarrow C & \neg B,\Gamma \Rightarrow C}$
\quad 
$\infer[R\neg\land] {\Gamma \Rightarrow \neg(A \land B)} {\Gamma\Rightarrow \neg A}$ 
$\infer[] {\Gamma \Rightarrow \neg(A \land B)} { \Gamma\Rightarrow \neg B}$

\

$\infer[{L\circ}_1] {\circ A, \Gamma \Rightarrow C}{A, \Gamma\Rightarrow C & \neg A, \Gamma\Rightarrow C }$
\quad
$\infer[{L\circ}_2] {\circ A, \Gamma \Rightarrow C}{ \Gamma\Rightarrow A & \Gamma\Rightarrow \neg A }$

\

$\infer[L\neg\neg] {\neg\neg A , \Gamma \Rightarrow C}{A , \Gamma \Rightarrow C}$
\quad 
$\infer[R\neg\neg] {\Gamma \Rightarrow \neg\neg A} {\Gamma \Rightarrow A}$

\end{center}

\m\mh 
A \emph{derivation} in \gfm\  is defined in the usual 
way as a finite tree of sequents such that each node of the tree is a sequent, the leaves
are initial sequents of the calculus (axioms), and each non-leaf node is obtained from its immediate 
predecessors by an application of one of the inference rules. The bottom
sequent of the tree is called the conclusion of the derivation. A sequent
$\Gamma\Rightarrow C$ is said to be derivable in the \gfm\  if there exists a derivation
whose conclusion is $\Gamma\Rightarrow C$.

\end{defn}

We now introduce some definitions that will be useful in the remainder of the paper.

 \begin{defn} (Generalized subformulas, formula weight and proof height)

\m\mh A \emph{literal}, $l$, is either an atomic formula $p$ or the negation $\neg p$ of an atomic formula~$p$.

\begin{enumerate}

\item The \emph{weight} of a formula $A$, denoted $w(A)$, is defined as follows:

\begin{enumerate}%
		\item  $ w(l) = 0$, where \textit{l} is a literal;
		
            \item $w(\neg \neg A) =  w(A)  + 1$;
		\item  $w(A \ast B) =   w(A) +  w(B)  + 1$, for $\ast \in \{ \land, \lor \}$;
		\item $w(\neg(A \ast B)) =   w(\neg A) +  w(\neg B)  + 1$,  
	 for $\ast \in \{\land,\lor \}$;
\item $w(\circ A) =   w(A) +  w(\neg A)  + 1$;

		\end{enumerate}

\item We define the notion of \emph{generalized subformula} as follows:

\begin{enumerate} %
 \item $A$ is 
a generalized subformula of $A$;
\item $A$ is 
a generalized subformula of $\neg A$;
\item $A$ and $B$ are    generalized subformulas of both  $A \ast B$,     
$\ast \in \{ \lor, \land \}$;
\item $\neg A$ and $\neg B$ are    generalized subformulas of  $\neg (A \ast B)$, 
$\ast \in \{ \lor, \land \}$;  
\item $A$ and $\neg A$ are    generalized subformulas of $\circ A $. 

\end{enumerate}

\item 
The \emph{height} of a derivation $\mathcal{D}$, denoted $h(\mathcal{D})$, is defined as follows:
\begin{enumerate}
  \item If $\mathcal D$ is a derivation consisting of an initial (axiomatic) sequent, then
  $h(\mathcal D)=0$.
  \item If $\mathcal D$ is a derivation of a sequent $S$ obtained from a single premise
  $S'$ by an inference rule, and $\mathcal D'$ is a derivation of $S'$, then
  $h(\mathcal D)=h(\mathcal D')+1$.
  \item If $\mathcal D$ is a derivation of a sequent $S$ obtained from two premises
  $S'$ and $S''$ by an inference rule, and $\mathcal D'$ and $\mathcal D''$ are derivations
  of $S'$ and $S''$, respectively, then
  $h(\mathcal D)=\max(h(\mathcal{D}'), h(\mathcal{D}'')) +1$.
\end{enumerate}

\noindent The definition extends in the obvious way to rules with more than two premises by taking the maximum over all immediate subderivations. 
Also, we can easily adapt this definition to natural deduction settings by replacing the word ``sequent'' 
  by the word ``formula'', and, in item (a), replacing ``initial (axiomatic) sequent'' for ``open assumption''.

 \end{enumerate}   
 \end{defn}

\begin{teom} \label{th.cut.elim.gfm} \ 

\m\mh 
    For every derivation in \gfm\ that uses the rule \emph{Cut}, there exists a derivation of the same end-sequent that does not use the rule \emph{Cut}.
\begin{proof}
In \cite{Coniglio2025}, cut-elimination is established for ${GLET}_J$ through a proof of elimination of multicut. The logical rules of \gfm\ are precisely the $\to$-free fragment of those of \gletj.  
While \gletj\ treats contexts as sets, thus making explicit contraction 
   unnecessary, \gfm\   uses multisets and therefore requires an explicit contraction rule.  
Adapting the cut-elimination proof from \gletj\ (with sets) to \gfm\ (with multisets) is essentially routine, and the use of multicut accounts for the known problems caused by contraction.
      \end{proof}
\end{teom}


The system \gfm\ is inspired by the systems \textit{G1i} and \textit{G3i} presented in \citet[Chapter~3]{Troelstra2000}. It extends \textit{G1i} with rules for $\neg\land$ and $\neg\lor$, which behave as the positive rules for $\lor$ and $\land$ (with $\land$ formulated as in \textit{G3i}), and with rules for \con, taken from $GLET_J$, a sequent calculus for the logic of evidence and truth \letj, introduced in \cite{Coniglio2025}.


{By contrast with \textit{G1i} and \textit{G3i}, which are able to prove the elimination of the non-atomic axiom rule (see \cite[pp. 65]{Troelstra2000}), in \gfm\ we cannot restrict the axiom rule to literals as principal formulas.   The reason is that, under such a restriction, we cannot derive $\con A \Rightarrow \con A$, since there is no right rule for $\con$.}

{Besides highlighting the peculiarities of a sequent system for \letfm, there is another reason for introducing \gfm\ first. Its logical rules provide a more convenient setting for proving its equivalence with \nfm. Indeed, this equivalence is established in Section~\ref{sec.equiv.gfm.nfm} below, and soundness and completeness with respect to the bivaluation semantics of Definition~\ref{def.val.smt.letfm} follow from it.}

  
In the following we show some constructive features of \gfm\    
and we prove the equivalence between the natural deduction system \nfm\ and \gfm. 


\begin{prop}  (Disjunction property and constructive falsity)  \label{prop.dis,pro}

\m\mh (i) Disjunction property: if $\Rightarrow A \lor B$ is derivable in \gfm,
then either $\Rightarrow~A$~or~$\Rightarrow B$ is derivable in \gfm.

\m\mh (ii) Constructive falsity: if $\Rightarrow \neg(A \land B)$ is
derivable in \gfm, then either $\Rightarrow \neg A$ or $\Rightarrow \neg B$
is derivable in \gfm.

\begin{proof}
For (i), given a cut-free derivation of $\Rightarrow A \lor B$, the only rule that can introduce the formula $A \lor B$ on the right is $R\lor$. 
For (ii), given a cut-free derivation of $\Rightarrow \neg(A \land B)$, the only rule that can introduce the formula $\neg(A \land B)$ on the right is $R\neg\land$.
 \end{proof}   
\end{prop}

\m 

We can strengthen this result by adapting the notion of Harrop formulas to our setting.

\begin{defn}
The class $\mathcal{H}$ of Harrop-like formulas is defined as follows:
\begin{enumerate} \setl 
    \item Every literal is an $\mathcal{H}$-formula.
    \item   If $A$ and $B$ are $\mathcal{H}$-formulas, then $A \land B$ is a $\mathcal{H}$-formula. 
    \item If $\neg A$ and $\neg B$ are $\mathcal{H}$-formulas, then $\neg(A \lor B)$ is a $\mathcal{H}$-formula.  
    \item  For any $A$, $\neg \con A$ is an $\mathcal{H}$-formula. 
    \item Nothing else is an $\mathcal{H}$-formula.
\end{enumerate}
\end{defn}

\begin{prop} \label{prop.str.disj} \ 

 \m\mh (i) Strong disjunction property: if $\Gamma\Rightarrow A \lor B$ is derivable in \gfm\ and $\Gamma$ consists only of $\mathcal{H}$-formula,
then either $\Gamma\Rightarrow A$ or $\Gamma\Rightarrow B$ is derivable in~\gfm.

\m\mh (ii) Strong constructive falsity: if $\Gamma\Rightarrow \neg(A \land B)$ is
derivable in \gfm\ and $\Gamma$ consists only of $\mathcal{H}$-formula, then either $\Gamma\Rightarrow \neg A$ or $\Gamma\Rightarrow \neg B$
is derivable in~\gfm.

\begin{proof} \ 

\m\mh (i) If $n = 0$, then the derivation is an axiom, and hence it must have the form
$A \Rightarrow A$. Thus, the end-sequent cannot be
$\Gamma \Rightarrow A \lor B$ with $\Gamma$ consisting only of
$\mathcal{H}$-formulas.
 Now suppose   $n>0$ and $\Gamma$ is composed of $\mathcal{H}$-formulas. 
If the last rule applied is  $R\lor$, then
$\Gamma \Rightarrow A$ or $\Gamma \Rightarrow B$, respectively, is a premise of
the last inference.
 If the last rule is  $L\neg\lor$, then $\Gamma = {\neg(C \lor D)},~{\Gamma'}$   and the premise is
$\neg C, \neg D,~\Gamma'~\Rightarrow~A \lor B$. Since $\neg(C \lor D)$ is a $\mathcal{H}$-formula,
$\neg C$ and $\neg D$ are $\mathcal{H}$-formulas as well. 
By the inductive hypothesis, either $\neg C, \neg D, \Gamma' \Rightarrow A  $ or $\neg C,\neg  D, \Gamma' \Rightarrow   B$. 
If $LW$ was used to derive $\Gamma \Rightarrow A \lor B$, introducing an $\mathcal{H}$-formula on the left, we can proceed in the obvious way, since the induction hypothesis will apply to the premise.
 Analogous reasoning applies when the last rule is $L\land$.  
 If any other rule is the last rule, then $\Gamma$ is not composed of $\mathcal{H}$-formulas. 
 
 \m\mh (ii) This proof can be directly adapted to the case of the strong constructive falsity property 
 by simply considering the rule $R\neg\land$ in the place of $R\lor$. The cases of $L\land$ and $L\neg\lor$ are analogous.  \end{proof}

\end{prop}

\begin{rem} (On the   constructive character of \letfm)  \label{rem.harrop.etc}
    
\m\mh (i) 
The idea of a constructive negation was proposed by Nelson \cite{nelson.49}, where the first-order 
logic \textit{N} is introduced. 
 The sentential fragment of \textit{N} is the logic \textit{N3}, and \nel\ is obtained 
 by dropping explosion from \textit{N3}. 
 Nelson’s motivation was to provide a logic that could serve as the underlying logic of a number theory  with constructive features lacking in intuitionistic logic (see \citet{nelson.49}, pp.~16ff.).
Both \textit{N3} and \nel\  have a constructive negation and also a constructive implication.  
That is, instead of proving the falsity of a formula $A$ by deriving a contradiction from the assumption 
$A$, one proves $\neg A$ 
directly (the notion of a proof of $\neg p$ is primitive, just like that of a proof of $p$). 
Nelson thus provided a  negation that fits the underlying idea of information-based logics, namely, that positive and negative information must be treated independently of each other (see \cite{wans.93}, pp.~13ff.). 
From the    point of view of both the evidence interpretation and the information-based interpretation, the validity of the constructive falsity property is indeed as desirable as the disjunction property.  

\m\mh (ii) 
Roughly, the underlying idea behind Harrop formulas is to preserve the constructive content of proofs. Thus, positive occurrences of disjunction are restricted, ensuring that any proof of a disjunction provides a constructive choice, i.e.\ an effective method for determining which alternative holds.
 The same idea applies to $\neg(A\land B)$, which is equivalent to $\neg A \lor\neg B$, in a 
context with constructive negation. 
  A formula $\con A$ cannot be an \hf, since it implies $A \lor \neg A$, which is equivalent to $\neg(A \land \neg A)$. Whenever either formula holds, even if restricted to one or a few instances, we can derive formulas of the form $A \lor B$, as well as formulas of the form $\neg(A \land B)$, without a constructive choice. This is incompatible with both the disjunction property and 
the constructive falsity property.
On the other hand, formulas of the form $\neg\con A$ are innocuous and do not compromise either of these properties.
It is also worth noting that  Propositions~\ref{prop.dis,pro} and \ref{prop.str.disj} above are interesting, but not at all unexpected. The logic \textit{FDE}, which is the \con-free fragment of \letfm, and the $\to$-free fragment of \nel, is constructive. On the other hand, formulas in the scope of \con\ exhibit classical, non-constructive behavior. 

\m\mh (iii) 
Note that the results of Propositions~\ref{prop.dis,pro} and~\ref{prop.str.disj} are features of \letfm, so they hold for the natural deduction system \nfm\ and are inherited by \gtfm, to be introduced in Section~\ref{sec.gtfm}.

\end{rem}

Now, we turn to the equivalence between \nfm\ and \gfm.

\subsection{Equivalence between \nfm\ and \gfm}    \label{sec.equiv.gfm.nfm}

The notation $\nfm \vdash \Gamma \Rightarrow C$ means that there exists a derivation of $C$ in \nfm\
from the open assumptions in $\Gamma$.
The notation
$\gfm \vdash \Gamma \Rightarrow C$ means that there exists a derivation of the sequent
$\Gamma \Rightarrow C$ in \gfm.
Note that although $\Gamma$ is a set of formulas in natural deduction, in the translation of a derivation
into a sequent we treat $\Gamma$ as a multiset: each open assumption class $[A]^u$ (with a distinct label)
contributes one occurrence of $A$ on the left-hand side of the corresponding sequent.

The derivability relation of \nfm\  admits left contraction and  left weakening. 
More precisely, if
$\nfm \vdash A,A,\Gamma\Rightarrow C$, then $\nfm \vdash A,\Gamma\Rightarrow C$, and if
$\nfm \vdash \Gamma\Rightarrow C$, then $\nfm \vdash A,\Gamma\Rightarrow C$.
Contraction is obtained by identifying the labels of two distinct assumption classes containing
the same formula.
Left weakening is obtained as follows: 

\begin{center}
$\infer[$LW$]{A,\Gamma \Rightarrow C}{\Gamma \Rightarrow C}$
\qquad becomes \qquad
$\infer{C}{\infer{A\land C}{[A]^u&\infer*{C}{\Gamma}}}$
\end{center}
\mh 
Left weakening corresponds to the monotonicity of the derivability relation: if a formula $C$ is derivable from the assumptions $\Gamma$, then $C$ is also derivable from
$\Gamma,A,$ where the added assumption $A$ may remain unused in the derivation.
These properties are required in order to establish the equivalence between \nfm\ and \gfm,
since \gfm\ includes both weakening and contraction as explicit structural rules.

\begin{teom}  \label{th.equiv.gn} \ 

\m \mh 
$\gfm  \vdash \Gamma \Rightarrow C$ if and only if $\nfm  \vdash \Gamma \Rightarrow C$.

\begin{proof} \ 

\m \mh Assume that  the proof of $\Gamma \Rightarrow C$ in \gfm\ is cut-free.

\m\mh From left to right, the proof proceeds by induction on the height $n$ of
derivations in \gfm.
Let   $\Gamma$ be a multiset. The notation $set(\Gamma)$ denotes the underlying set
associated with $\Gamma$, that is, the set of formulas occurring in $\Gamma$,
ignoring multiplicities.
  If $n = 0$, then $\Gamma \Rightarrow C$ is an axiom. In \nfm, this corresponds to a deduction consisting of a single node  $C$.
 If $n > 0$, let $\mathcal{D}$ be a derivation of $\Gamma \Rightarrow C$ of height $n$ in \gfm.  
We assume as induction hypothesis that the result holds for all derivations in \gfm\ of height at most $n-1$; that is, for every derivation $\mathcal{D}'$ of $\Gamma' \Rightarrow C'$ in \gfm\ of height at most $n-1$, there exists a deduction $\mathcal{D}'^*$ in \nfm\ deriving $C'$ from  ${set}(\Gamma')$.
We then consider all the inference rules of \gfm.

{\small 

\begin{center}
      
${\infer[R\neg\lor]{\Gamma\Rightarrow\neg(A\lor B)}{\deduce{\Gamma\Rightarrow\neg A}{\mathcal{D}_1}&\deduce{\Gamma\Rightarrow\neg B}{\mathcal{D}_2}}}$
\qquad\text{goes to}\qquad
${\infer[I\neg\lor]{\neg(A\lor B)}{\deduce{\neg A}{\mathcal{D}_1^*} & \deduce{\neg B}{\mathcal{D}_2^*}}}$

\end{center}

 \m 

\begin{center}
$\infer[L\neg\lor]{\neg(A \lor B), \Gamma \Rightarrow C}{\deduce {\neg A, \neg B, \Gamma \Rightarrow C}{\mathcal{D}_1}}$ 
\qquad goes to \qquad   
$\deduce{C}{\infer {\mathcal{D}^*_1} {\infer{\neg A}{\neg (A \lor B)} & \infer{\neg B}{\neg (A \lor B)}}}$
\end{center}

\m 

\begin{center}
$\infer[L\con_1]{\con A, \Gamma \Rightarrow C}{\deduce{A, \Gamma\Rightarrow C}{D_1}  &  \deduce{\neg A, \Gamma\Rightarrow C}{D_2}}$
\qquad goes to \qquad   
$\infer[PEM^\con]{C}{ \con A & \deduce{C}{\deduce{\mathcal{D}_1^*}{[A]}} & \deduce{C}{\deduce{\mathcal{D}_2^*}{[\neg A]}}}$
\end{center}
}



\mm\mh The remaining cases are left to the reader.

\mm 

\m\mh From right to left, the proof proceeds by induction on the height $n$ of deductions in \nfm.
  If $n = 0$, then the deduction in \nfm consists of a single node $C$. 
 In \gfm, $\Gamma \Rightarrow C$  corresponds to an axiom.
 If $n>0$, let $\mathcal{D}$ be a deduction in \nfm\ of height $n$ deriving $C$ from a set of assumptions $\Gamma$.
As induction hypothesis, assume that the result holds for every deduction in \nfm\ of height at most $n-1$; that is,
for any deduction $\mathcal{D}'$ of height at most $n-1$ deriving $C'$ from a set of assumptions $\Gamma'$,
there exists a derivation $\mathcal{D}'^{+}$ in \gfm\ of the sequent $\Gamma'' \Rightarrow C'$,
where $\Gamma''$ is the multiset corresponding to $\Gamma'$.
We then examine the last inference rule applied in $\mathcal{D}$ and construct a corresponding derivation in \gfm.  
 The double line denotes a (possibly empty) sequence of applications of \textit{WL} or \textit{CL}.  
Moreover, the notation $(\Gamma_1,\dots, \Gamma_n)$ stands for the multiset that contains  exactly one occurrence of each distinct formula  appearing in $\Gamma_1$, \dots,  $\Gamma_n$, that is, multiplicities in the original multisets are ignored. 

\m

{\small 
\begin{center}
$
\infer[I\land]{A\land B}{\deduce{A}{\mathcal{D}_1}& \deduce{B}{\mathcal{D}_1}}$ 
\qquad goes to \qquad    
$\infer={(\Gamma_1,\Gamma_2)\Rightarrow A\land B}{
  \infer[R\land]{\Gamma_1,\Gamma_2\Rightarrow A\land B}{
    \infer={\Gamma_1,\Gamma_2\Rightarrow A}{\deduce{\Gamma_1\Rightarrow A}{D_1^{+}}}
    &
    \infer={\Gamma_1,\Gamma_2\Rightarrow B}{\deduce{\Gamma_2\Rightarrow B}{D_2^{+}}}
  }}$
\end{center}

\m

\begin{center}
$   \infer[E\neg\lor]{\neg A}{\deduce{\neg (A \lor B)}{\cal D}}$
\qquad goes to \qquad    
$\infer[{Cut}]{\Gamma \Rightarrow \neg A}{\deduce{\Gamma \Rightarrow \neg( A \lor B)}{\mathcal{D}^+} & \infer[{L\neg\lor}]{\neg( A \lor B) \Rightarrow \neg A}{\infer[LW]{\neg A, \neg  B \Rightarrow \neg A}{\neg A \Rightarrow \neg A}}}$
\end{center}

\m 
 

\begin{center}
$\infer[EXP^\con]{C}{\deduce{\con A}{\mathcal{D}_1}  & \deduce{A}{\mathcal{D}_2} & \deduce{\neg A}{\mathcal{D}_3}}$
\qquad goes to \qquad    
$\infer {\infer= {(\Gamma_1,\Gamma_2,\Gamma_3) \Rightarrow C}{\Gamma_1,\Gamma_2,\Gamma_3 \Rightarrow C}} {\deduce{\Gamma_1 \Rightarrow \circ A}{\mathcal{D}_1^+} & {\infer [Cut] {\circ A,\Gamma_2, \Gamma_3 \Rightarrow C} {{\infer={\Gamma_2, \Gamma_3 \Rightarrow  A} {\deduce{\Gamma_2 \Rightarrow   A}{\mathcal{D}_2^+}} } & {\infer= {\Gamma_2, \Gamma_3 \Rightarrow \neg A} {\deduce{\Gamma_2 \Rightarrow   \neg A}{\mathcal{D}_3^+}} }} }}$   

\end{center}
}

\m  

\mh The remaining cases are left to the reader.
\end{proof}
\end{teom}

\section{The cut-free sequent system \gtfm}  \label{sec.gtfm}


We now introduce \gtfm, a sequent system for \letfm\ consisting solely of logical rules.
Its design follows the style of the purely logical systems proposed by Kleene in \cite[\S80]{Kleene1952}. 
The concepts of context, side and principal formulas remains the same, but with the additional detail that the occurrence of the principal formula on the premise(s) of the logical rules is also called a active formula.

\begin{defn} (The system \gtfm)    \label{def.g3fm}

\m \mh 
\textit{Axioms:}
\begin{center}
$\infer[Id] {\Gamma, A\Rightarrow A} {}$
\end{center}

\mh \textit{Logical rules:}
\begin{center}
$\infer[L\land] {A \land B,\Gamma \Rightarrow C} {A,B, A \land B, \Gamma \Rightarrow C}$ 
\quad 
$\infer[R\land] {\Gamma \Rightarrow A \land B} { \Gamma\Rightarrow A   &   \Gamma\Rightarrow B}$

\

$\infer[L\lor] {A \lor B,\Gamma \Rightarrow C} {A,A \lor B, \Gamma \Rightarrow C & B,A \lor B,\Gamma \Rightarrow C}$
\quad 
$\infer[R\lor] {\Gamma \Rightarrow A \lor B } { \Gamma\Rightarrow A }$ 
$ \infer[] {\Gamma \Rightarrow A \lor B } { \Gamma\Rightarrow B}$

\

$\infer[L\neg\lor] {\neg(A \lor B),\Gamma \Rightarrow C} {\neg A, \neg B,\neg(A \lor B),\Gamma \Rightarrow C}$
\quad 
$\infer[R\neg\lor] {\Gamma \Rightarrow \neg(A \lor B)} { \Gamma\Rightarrow \neg A   &   \Gamma\Rightarrow \neg B}$

\

$\infer[L\neg\land] {\neg(A \land B),\Gamma \Rightarrow C} 
{\neg A,\neg(A \land B),\Gamma \Rightarrow C & \neg B,\neg(A \land B),\Gamma \Rightarrow C}$

 \ 
 
$\infer[R\neg\land] {\Gamma \Rightarrow \neg(A \land B)} {\Gamma\Rightarrow \neg A}$  
$\infer{\Gamma \Rightarrow \neg(A \land B)} { \Gamma\Rightarrow \neg B}$

\

$\infer[{L\circ}_1] {\circ A, \Gamma \Rightarrow C}{A, \circ A, \Gamma\Rightarrow C & \neg A,\circ A, \Gamma\Rightarrow C }$
\quad
$\infer[{L\circ}_2] {\circ A, \Gamma \Rightarrow C}{ \circ A,\Gamma\Rightarrow A & \circ A,\Gamma\Rightarrow \neg A }$

\

$\infer[L\neg\neg] {\neg\neg A , \Gamma \Rightarrow C}{A, \neg\neg A, \Gamma \Rightarrow C}$
\quad 
$\infer[R\neg\neg] {\Gamma \Rightarrow \neg\neg A} {\Gamma \Rightarrow A}$
\end{center}
\end{defn}

The differences between \gtfm\ and \gfm\ are the following: (i) the presence of multisets of formulas in the axiom rule; (ii)  the repetition of the principal formulas of the rules in their premises (usually referred to as contraction-absorbing rules, or rules obtained by  Kleene’s trick  -- see Remark~\ref{rem.kleene.trick} below); (iii)  the absence of explicit structural rules.
(i) and (ii)  are  motivated by the need to establish the admissibility of  structural rules, which we will do below. These proofs will make precise the specific role played by each of these modifications.

\begin{rem} (Kleene's trick)   \   \label{rem.kleene.trick}
    
\m\mh 
The so called Kleene’s trick is a reformulation of the logical rules of a sequent calculus, introduced by Kleene in 
\cite[\S80]{Kleene1952}, whose main aim is to simplify decidability proofs     through the absorption of contraction  into the logical rules.

Let us compare the two systems by showing how an application of a rule in the contraction-absorbing \gtfm\ system is represented in \gfm\ where contraction is an explicit rule.
Consider the left conjunction rule of \gtfm.   

\begin{center}
$\infer[L \land]{(A \land B), \Gamma \Rightarrow C}{(A \land B),A,B, \Gamma \Rightarrow C} $
\end{center}

\mh It is easy to see that this rule is really composed of two steps: the introduction of a conjunction followed by contraction. In \gfm, an application of this rule can be represented as follows:

\begin{center}
$\infer[LC]{(A \land B), \Gamma \Rightarrow C}{\infer[L \land]{(A \land B),(A \land B), \Gamma \Rightarrow C}{(A \land B),A,B, \Gamma \Rightarrow C}} $
\end{center}


\end{rem}

\m

\subsection{Admissibility of structural rules in \gtfm}

We now show that the structural rules are admissible in \gtfm. In the following we will write $\vdash_{n}S$ (where $S$ is a sequent) to mean that $S$ has a derivation of
height at most $n$. When there is no risk of ambiguity, we omit the subscript and write
$\vdash S$ to denote the derivability of $S$.

\begin{teom}(Height-preserving weakening) \ \label{th.admis.LW}  

\m \mh 
If $\vdash_n \Gamma \Rightarrow C$ and $\Gamma \subseteq \Gamma'$, then
$\vdash_n \Gamma' \Rightarrow C$.

\begin{proof} \ 

\m\mh The proof is by induction on the height $n$ of the given derivation.
 
\m\mh If $n=0$, then $\Gamma \Rightarrow C$ is an axiom. Hence, for any $\Gamma'$ with
$\Gamma \subseteq \Gamma'$, the sequent $\Gamma' \Rightarrow C$ is also an axiom,
since adding formulas to the antecedent yields another axiom.

\m\mh For $n>0$, we consider the last inference in the derivation of
$\Gamma \Rightarrow C$.

\m\mh Suppose that $\Gamma \Rightarrow C \ :=\ \neg(A \land B), \Sigma \Rightarrow C$, i.e. 
$\Gamma \Rightarrow C$ is obtained by an application of
$L\neg\land$. Then the premises of the last inference are
$\neg A, \neg(A \land B),\Sigma \Rightarrow C$
 and $\neg B, \neg(A \land B),\Sigma \Rightarrow C$,
each one having a derivation of height strictly less than $n$. 
By the induction hypothesis, for any $\Sigma'$ such that
$\Sigma \subseteq \Sigma'$,
 we obtain derivations of
 $\neg A, \neg(A \land B),\Sigma' \Rightarrow C $   
 and 
$\neg B, \neg(A \land B),\Sigma' \Rightarrow C$
 of height strictly less than $n$.
Applying $L\neg\land$ to these sequents, we derive
$\neg(A \land B), \Sigma' \Rightarrow C$,
that is, $\Gamma' \Rightarrow C$, with a derivation of height at most $n$.

\m\mh The other cases
are left to the reader.       
\end{proof}
\end{teom}





\begin{teom} (Height-preserving contraction)   \label{th.admis.LC}

\m\mh 
If $\vdash_n A, A, \Gamma \Rightarrow C$, then $\vdash_n A, \Gamma \Rightarrow C$.
\end{teom}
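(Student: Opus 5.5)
The plan is to prove the statement by induction on the height $n$ of the derivation of $A,A,\Gamma\Rightarrow C$, simultaneously for all formulas $A$, multisets $\Gamma$ and succedents $C$. Because every logical rule of \gtfm\ repeats its principal formula in the premises (Kleene's trick), we never need inversion lemmas. This is the crucial simplification compared with \textit{G3}-systems in the style of Troelstra--Schwichtenberg.

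\emph{Base case} ($n=0$). Here $A,A,\Gamma\Rightarrow C$ is an instance of $Id$, so $C$ occurs in the antecedent $A,A,\Gamma$. Then $C$ also occurs in $A,\Gamma$: either $C=A$, or $C\in\Gamma$. Hence $A,\Gamma\Rightarrow C$ is again an axiom, with height $0$.

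\emph{Inductive step} ($n>0$). We look at the last rule applied. Two situations arise.

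\textbf{Case 1: no occurrence of $A$ is principal.} Examples are a right rule, or a left rule whose principal formula lies in $\Gamma$. Each premise has the form $A,A,\Gamma^{*}\Rightarrow C^{*}$, because the rule only adds active formulas to the antecedent and keeps the rest of the context. Its height is at most $n-1$. The induction hypothesis gives $\vdash_{n-1} A,\Gamma^{*}\Rightarrow C^{*}$, and applying the same rule yields $\vdash_n A,\Gamma\Rightarrow C$.

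For $L\circ_2$, whose premises have a changed succedent, this still works, since the antecedent context is preserved wholesale. For instance, if the principal formula is $\circ D\in\Gamma$ with $\Gamma=\circ D,\Gamma_0$, the premises are $\circ D,A,A,\Gamma_0\Rightarrow D$ and $\circ D,A,A,\Gamma_0\Rightarrow\neg D$, and both contract by the induction hypothesis.

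\textbf{Case 2: one occurrence of $A$ is principal.} Because of Kleene's trick, each premise still contains both copies of $A$: the principal one is repeated and the other sits in the context. For example, with $L\land$ and $A=D\land E$, the premise is $D,E,D\land E,D\land E,\Gamma\Rightarrow C$, of height at most $n-1$. The induction hypothesis contracts this to $D,E,D\land E,\Gamma\Rightarrow C$, again of height at most $n-1$, and one application of $L\land$ gives $D\land E,\Gamma\Rightarrow C$ with height at most $n$.

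The same pattern covers the remaining left rules:
\begin{itemize}
\item $L\lor$, $L\neg\land$ and $L\circ_1$ have two premises; we contract each premise separately and then reapply the rule.
\item $L\neg\lor$ and $L\neg\neg$ are handled exactly like $L\land$.
\item $L\circ_2$ has premises $\circ D,\circ D,\Gamma\Rightarrow D$ and $\circ D,\circ D,\Gamma\Rightarrow\neg D$. Contracting both by the induction hypothesis and reapplying $L\circ_2$ gives the result.
\end{itemize}

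The only step needing real care is the uniform bookkeeping in Case 1. We must verify, rule by rule, that the duplicated $A$ stays in the context of every premise and that the induction hypothesis is applied to each premise at height at most $n-1$. This holds because no rule of \gtfm\ deletes context formulas. Apart from that, no inversion and no induction on formula weight are needed, since contraction is already absorbed into the rules.
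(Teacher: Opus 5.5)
Your proof is correct and follows the same route as the paper: induction on height, with the non-principal case handled by applying the induction hypothesis to the premises and reapplying the rule. The principal case (the paper works out $L\neg\lor$ and $L\circ_2$) is handled the same way, using the fact that Kleene's trick keeps both copies of the contracted formula in every premise, so no inversion lemma is needed; your remark that no rule of \gtfm\ deletes context formulas is what justifies the paper's brief ``reapply the same inference'' step.
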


\begin{proof} \  

\m\mh 
The proof is by induction on the height $n$ of the derivation.

\m\mh If $n=0$, then $A, A, \Gamma \Rightarrow C$ is an axiom, and hence
$A, \Gamma \Rightarrow C$ is also an axiom.

\m\mh For $n>0$, we consider the last inference in the derivation of
$A, A, \Gamma \Rightarrow C$.

\m\mh (i) If the contracted formula $A$ is not principal in the last inference,
then the premises of that inference have derivations of
height strictly less than $n$.
By the induction hypothesis, contraction is admissible in those premises,
and reapplying the same inference yields a derivation of
$A, \Gamma \Rightarrow C$ of height at most $n$.

\m\mh (ii) Now suppose the contracted formula  $A$ is principal in the last inference. 

\m\mh  Suppose $A = \neg(D \lor E)$. In this case,  the last rule is $L\neg\lor$ and   
the conclusion is $\neg(D \lor E), \neg(D \lor E), \Gamma \Rightarrow C$.
The premise of this inference is
$\neg D, \neg E, \neg(D \lor E), \neg(D \lor E), \Gamma \Rightarrow C$,
which has a derivation of height strictly less than $n$.
By the induction hypothesis, we obtain a derivation of
$\neg D, \neg E, \neg(D \lor E), \Gamma \Rightarrow C$
of height strictly less than $n$.
Applying $L\neg\lor$, we derive
$\neg(D \lor E), \Gamma \Rightarrow C$,
that is, $A, \Gamma \Rightarrow C$, with a derivation of height at most $n$.

\m\mh  Suppose $A=\con B$ and  the last rule is $L\con_2$.     
In this case, the conclusion is $\con B,\con B, \Gamma \Rightarrow C$, and  the premises are $\con B,\con B, \Gamma \Rightarrow B$ and
$\con B,\con B, \Gamma \Rightarrow \neg B$, both with derivations of height strictly less than $n$.
By the induction hypothesis, we obtain derivations of both premises with height strictly less than $n$.
Applying $L\con_2$, we obtain $\con B, \Gamma \Rightarrow C$,
that is, $A, \Gamma \Rightarrow C$, with a derivation of height at most $n$.

\m\mh The remaining cases are left to the reader.
\end{proof}



\mm


We now prove the admissibility of (multiplicative)  
\textit{Cut} for \gtfm\ following the format established by Dragalin  \cite{Dragalin1988}.
 The proof proceeds by a principal induction on the \emph{weight} of the cut formula, together with a secondary induction on the height of the cut, i.e.\ the sum of the heights of the premises in the corresponding application of the rule.

\begin{teom}  
(The $Cut$ rule is admissible in \gtfm)  \label{th.cut.gtfm}

\m\mh 
If $\vdash\Gamma \Rightarrow D$ and $\vdash D, \Gamma' \Rightarrow C$, then so is $\vdash \Gamma, \Gamma' \Rightarrow C$.

\begin{proof} \ 

\m\mh We need to consider three   cases: 

 1.  At least one of the premises of \textit{Cut} is an axiom;
   
   2. The cut formula is principal in both premises;
    
    3. The cut formula is not principal in at least one of the premises.

\m\mh Case 1: At least one of the premises  is an axiom. 



{\noindent 
An application of \textit{Cut} can be eliminated directly in one of two ways, depending on whether the cut formula is a principal formula or a context formula in the axiom rule.

 \mh (i) If the cut formula is principal in the axiom rule, then the two premises of the \textit{Cut} are $\Gamma,D \Rightarrow D$ and $D, \Gamma^{'} \Rightarrow C$, concluding $\Gamma, \Gamma^{'}, D \Rightarrow C$, which can be alternatively derived by zero or more applications of $LW$ to the latter premise, deleting the application of \textit{Cut} and the former premise.

 \mh (ii) If the cut formula is not principal in the axiom rule, then it is part of the context, and the two premises of the \textit{Cut} are $\Gamma \Rightarrow D$ and $D, \Gamma^{'}, A \Rightarrow A$, concluding $\Gamma, \Gamma^{'},A \Rightarrow A$, which is obviously also an axiom sequent, which can be derived without cut. 
}

\m 
\mh Case 2:  
The cut formula is principal in both premises. 
 We proceed by replacing the original $Cut$ with one of less \textit{height} and one or two of less \textit{weight}. 
 In what follows, the double line indicates one or more applications of   \textit{LC}.

\m\mh (i) The cut-formula $D = \neg (A \lor B)$.

{\begin{prooftree}
\def\fCenter{\mbox{\ $\Rightarrow$\ }}
\AxiomC{$\Gamma \Rightarrow \neg A$}
\AxiomC{$\Gamma \Rightarrow \neg B$}
\LeftLabel{$R\neg \lor$}
\BinaryInf$\Gamma \fCenter \neg (A \lor B)$
\AxiomC{$\neg A, \neg B, \neg (A \lor B), \Gamma^{'} \fCenter C$}
\RightLabel{$L\neg \lor$}
\UnaryInf$\neg (A \lor B), \Gamma^{'} \fCenter C$
\RightLabel{$Cut$}
\BinaryInf$\Gamma, \Gamma^{'} \fCenter C$
\end{prooftree} }

is replaced with 

{\footnotesize $\infer=[LC]{\Gamma,\Gamma^{'} \Rightarrow C} {\infer[Cut] {\Gamma, \Gamma, \Gamma, \Gamma^{'} \Rightarrow C} {\Gamma \Rightarrow \neg B & \infer[Cut]{\neg B, \Gamma, \Gamma, \Gamma^{'} \Rightarrow C}{\Gamma \Rightarrow \neg A & \infer[Cut]{\neg A, \neg B, \Gamma, \Gamma^{'} \Rightarrow C}{\infer[R \neg \lor]{\Gamma \Rightarrow \neg (A \lor B)}{\Gamma \Rightarrow \neg A & \Gamma \Rightarrow \neg B} & \neg A, \neg B, \neg (A \lor B), \Gamma^{'} \Rightarrow C}} }}$}


\m\mh (ii) The cut formula $D = A \lor B$. 

\begin{center}
    $\infer[Cut]{\Gamma, \Gamma^{'} \Rightarrow C}{\infer[R \lor]{\Gamma \Rightarrow A \lor B}{\Gamma \Rightarrow A} & \infer[L \lor]{A \lor B, \Gamma^{'} \Rightarrow C}{A, A \lor B, \Gamma^{'}\Rightarrow C & B, A \lor B, \Gamma^{'} \Rightarrow C}}$
\end{center}

is replaced with 

\begin{center}
    
$\infer=[LC]{\Gamma, \Gamma^{'} \Rightarrow C}{\infer[Cut]{\Gamma,\Gamma, \Gamma^{'} \Rightarrow C}{\Gamma \Rightarrow A & \infer[Cut]{A, \Gamma, \Gamma^{'} \Rightarrow C}{\infer[R \lor]{\Gamma \Rightarrow A \lor B}{\Gamma \Rightarrow A} & A, A \lor B, \Gamma^{'} \Rightarrow C}} }$

\end{center}
\m\mh The remaining cases are left to the reader. 

\m\mh 
3. The cut formula is not principal in at least one of the premises. 
In this case, we proceed by reducing the height of the cut, except when the left premise has been obtained by the rule 
$L\circ_2$.

\m\mh (i) The last rule of the right premise is $L\con_1$. 

\begin{center}
    $\infer[Cut]{\circ A,\Gamma, \Gamma^{'}\Rightarrow C}{\Gamma \Rightarrow D & \infer[L\circ_1]{\con  A, D, \Gamma^{'}\Rightarrow C}{A,\circ A,D,\Gamma^{'} \Rightarrow C & \neg A,\circ A,D,\Gamma^{'} \Rightarrow C}}$ 

\end{center}

is replaced with 

\begin{center}
 
$\infer[L\circ_1]{\circ A, \Gamma, \Gamma^{'}\Rightarrow C}{\infer[Cut]{A,\circ A, \Gamma,\Gamma^{'} \Rightarrow C}{\Gamma \Rightarrow D & A,\circ A, D, \Gamma^{'} \Rightarrow C} & \infer[Cut]{\neg A,\circ A,\Gamma, \Gamma^{'} \Rightarrow C}{\Gamma \Rightarrow D & \neg A,\circ A, D, \Gamma^{'} \Rightarrow C}}$
\end{center}

\m\mh (ii)  The last rule of the left premise is $L\neg \lor$.

\begin{center}
$\infer[Cut]{\neg(A \lor B), \Gamma, \Gamma^{'} \Rightarrow C}{\infer[L\neg \lor]{\neg(A \lor B), \Gamma \Rightarrow D}{\neg A, \neg B, \neg(A \lor B), \Gamma \Rightarrow D} & D, \Gamma^{'} \Rightarrow C}$
\end{center}

is  replaced with 

\begin{center}
$\infer[L\neg \lor]{\neg(A \lor B), \Gamma, \Gamma^{'} \Rightarrow C}{\infer[Cut]{\neg A, \neg B,\neg(A \lor B), \Gamma, \Gamma^{'} \Rightarrow C}{\neg A, \neg B, \neg(A \lor B), \Gamma \Rightarrow D & D, \Gamma^{'} \Rightarrow C}}$
\end{center}

\m\mh (iii)  The  left premise has been obtained by $L\con_2$. 

\begin{center}
$\infer[Cut]{\circ A,\Gamma,\Gamma^{'} \Rightarrow C}{\infer[L\circ_2]{\circ A, \Gamma \Rightarrow D}{\circ A, \Gamma \Rightarrow A & \circ A, \Gamma \Rightarrow \neg A} & D, \Gamma^{'}\Rightarrow C}$
\end{center}

is  replaced with 

\begin{center}
    $\infer[LW]{\circ A, \Gamma, \Gamma^{'} \Rightarrow C}{\infer[L\circ_2]{\circ A, \Gamma \Rightarrow C}{\circ A, \Gamma \Rightarrow A & \circ A, \Gamma \Rightarrow \neg A}}$
\end{center}

\m\mh The remaining cases are left to the reader.    
\end{proof}
    
\end{teom}

\begin{prop}     (Generalized subformula property)  \label{prop.subf.prop.g3}

\m\mh 
{Let $D$ be a cut-free derivation of $\Gamma \Rightarrow C$ in \gtfm. Then, all formulas appearing in $D$ will be generalized subformulas of $\Gamma $ and  $C$}
\begin{proof}
The  claim is easily established by a simple inspection of the rules given of \gtfm.     \end{proof}
\end{prop}

 \begin{rem} \ 
 
\m\mh Schellinx \cite{Schellinx1994TheNA} explored (mainly in the context of linear logic) how the use of structural rules affects the reduction steps in a cut-elimination proof, as well as how the reduction steps can give information about the derivation itself. 
When contraction or weakening are used to derive a cut-formula, a corresponding operation at the level of (sub)derivations occurs during the reduction steps: either a (sub)derivation is deleted or it is duplicated. 
This allows for an interesting observation of how structural rules, even when ``absorbed'' by logical rules and made implicit, still produce similar effects of duplication, in the case of contraction, and deletion, in the case of weakening. 
In the proof above, this behavior is  visible in Case~2, since
contraction is ``built into'' the logical rules (see
Remark~\ref{rem.kleene.trick}),
 and in sub-case (iii) of Case 3, where weakening is implicit in the $L\circ_2$ rule.

\end{rem}

\begin{rem} (On the path to \gtfm) \ \label{rem.path.gtfm}
        

In order to obtain a fully logical sequent calculus for \letfm, we must establish the admissibility of the structural rules.
Weakening is straightforward: its admissibility could be achieved for our systems just by adding an
axiom rule with contexts. 
Contraction, however, is not. 

Our first attempt consisted in dropping contraction, cut, and weakening from \gfm, adding the new axiom rule with contexts, and then trying to prove the height-preserving admissibility of contraction.
This was a natural move, since the logical rules of \gfm\ already resemble those of standard \textit{G3} systems for intuitionistic logic, where the result follows by a simple induction on derivation height.
Unfortunately, this did not work, since the proof requires a height-preserving inversion lemma for the left rules.
Intuitively, the demonstration proceeds by showing that contraction can be ``pushed up'' the derivation by applying contraction to the subformulas of   the contracted formula until it is eliminated. This can be achieved with the help of the height-preserving inversion lemma, which cannot be established for the \con-rules. Therefore, in order to make progress, we had to modify our calculus.



This led us to an intermediate system, all its logical rules coincided with those of our slightly modified and structural rule-free \gfm, except for the \con-rules, which we reformulated as in \gtfm \footnote{\textit{G3} calculi for intuitionistic logic, like \textit{G3ip} \cite{Negri2001}, usually adopt a similar strategy, using Kleene's trick on the left premise of the left implication rule to deal with its non-invertibility in contraction admissibility proofs.}.
Even so, the proof did not go through: it failed because we cannot restrict the axiom rule to literal principal formulas (as explained in Section~\ref{sec.gfm}), which blocks the proof of the height-preserving invertibility lemma, even for the non-\con\ rules.


It was this failed attempt with the intermediate system that led us to apply Kleene’s trick to all logical rules of \gfm\ to obtain \gtfm. This, in turn, enabled us to prove the admissibility of height-preserving $LC$ without appealing to any invertibility lemma.

    \end{rem}

\subsection{Soundness and completeness} \label{sec.sound.comp}

We first establish the equivalence between \gfm\ and \gtfm. 


\begin{teom}  \ \label{th.equiv.G1G3}

\m\mh 
{$\gfm\vdash \Gamma \Rightarrow C$    if and only if $\gtfm\vdash \Gamma \Rightarrow C$}

\begin{proof}  
From left to right, all left structural rules, together with cut, were shown to be admissible in \gtfm\ in theorems  \ref{th.admis.LW}, \ref{th.admis.LC}, and \ref{th.cut.gtfm}.  All left logical rules of \gfm\ can also be shown admissible in \gtfm\ through the help of Theorem~\ref{th.admis.LW}.
 From right to left, all rules of \gtfm\ can also be shown admissible in \gfm. The axiom rule follows from 0 or more uses of $LW$, and the other logical rules follow from a simple application of $LC$.
\end{proof}
\end{teom}

Now, soundness and completeness of \gtfm\ follow straightforwardly from the above result. 

\begin{teom}  \

\m
\mh \gtfm\ is sound and complete with respect to the bivalued semantics of \letfm:  $\Gamma\vdash_{\gtfm} A$ \ if and only if \ $\Gamma\vDash_\letfm  A$ 

\begin{proof}
 The result follows from Theorem~\ref{th.equiv.G1G3} above, together with  Theorems~\ref{th.sound.comp.nfm} and \ref{th.equiv.gn}. 
\end{proof}
\end{teom}

\subsection{A decision method for \letfm} \label{sec.decid}




Recall that the design of our rules is inspired by Kleene’s \textit{G3} systems. He introduced these systems primarily to obtain decidability results via a bottom-up proof-search procedure, precisely to control difficulties arising from contraction. In what follows, we follow the same lines as Kleene’s approach in \cite[\S80]{Kleene1952} to prove the decidability of \gtfm.

\begin{defn} \  (Cognate sequents)

\m\mh (i) Two sequents $\Gamma \Rightarrow A$ and $\Gamma' \Rightarrow A'$ are said to be \textit{cognate} if $\Gamma$ and $\Gamma'$ contain the same formulas, possibly differing in multiplicity, that is, in the number of occurrences of each formula, and $A = A'$.

\m\mh 
(ii) An \textit{irredundant} proof in \gfm\ is a proof in which no pair of \textit{cognate} sequents occurs one above the other along the same derivation branch.

\end{defn}

\begin{prop}   (Irredundant derivations) \label{prop.irred.deriv} 

\m\mh  
    Every cut-free derivation $\mathcal D$ of $\Gamma \Rightarrow C$ in \gtfm\ can be transformed into an \emph{irredundant} 
    cut-free derivation $\mathcal D^{*}$ of the same sequent.

\begin{proof} \ 

\m\mh If no branch of $\mathcal D$ contains a pair of cognate sequents such that one
occurs above the other (possibly with intervening sequents), then $\mathcal D$
is already irredundant.

\m\mh Otherwise, fix a branch and choose a pair of cognate sequents $S_1$ and $S_2$ on that branch such that $S_1$ occurs above $S_2$ and there is no cognate sequent strictly between them on that branch. Write $S_1$ as $\Sigma \Rightarrow A$ and $S_2$ as $\Sigma' \Rightarrow A$, where $\Sigma$ and $\Sigma'$ contain the same formulas, possibly with different multiplicities.

\m\mh Delete the subderivation leading from $S_1$ to $S_2$ ($S_2$ included) together with all branches contributing to it. 
Next,    apply   Theorem~\ref{th.admis.LC}  to $S_1$ (possibly zero times), rendering it equal to $S_2$.
The resulting derivation is still cut-free, derives $\Gamma \Rightarrow C$,
and strictly decreases the number of occurrences, along the branches of
$\mathcal D$, of pairs of cognate sequents with one occurring above the other.

\m\mh By iterating this transformation, and since the number of occurrences of cognate sequents is finite, the process must terminate, yielding a cut-free, irredundant derivation of $\Gamma \Rightarrow C$.
 \end{proof}
\end{prop}

The proof that \gtfm\ provides a decision method to \letfm\   will be given after the following 
example illustrating the decision procedure.

\begin{exem} \ \label{ex.silog.disj} 

\mm\mh 
(i) The sequent $\neg p, p\lor q \Rightarrow q$ (disjunctive syllogism) is not derivable in \gtfm.   

\m 
\begin{center}   
$\infer[L\lor]{\neg p, p\lor q \Rightarrow q}{\infer[L\lor]{\neg p, p, p\lor q\Rightarrow q}{ \neg p, p, p, p\lor q\Rightarrow q & \neg p, p, q, p\lor q \Rightarrow q } & \neg p, q, p\lor q \Rightarrow q}$    
    
\m \justifying{\mh Observe that further bottom-up applications of $L\lor$ will keep producing axioms on the right premise and cognate sequents on the left premise. The example above   shows that disjunctive syllogism   is not derivable, as expected, since in the context of \lets\ 
it is equivalent to explosion. Below,   we   show how its derivability can be recovered by means of the classicality connective. }    
    
    \end{center}
\mh (ii) The sequent $\con p, \neg p, p\lor q \Rightarrow q$   is  derivable in \gtfm.   

 \begin{center}
$\infer[L\lor]{\con p, \neg p, p\lor q \Rightarrow q}{\infer[L\con_2]{\con p, 
\neg p, p, p\lor q \Rightarrow q}{ \con p, \neg p, p, p\lor q \Rightarrow p & \con p, \neg p, p, p\lor q \Rightarrow \neg p} & \con p, \neg p, q, p\lor q \Rightarrow q}$

\m \justifying{\mh A bottom-up application of the rule $L\lor$ yields an axiom as the right premise and a sequent as the left 
premise that is clearly derivable, since it is an instance of $\Gamma,\con A, A, \neg A \vdash B$. 
The latter is derivable in \gtfm\ and  recovers the principle of explosion. By a further bottom-up application of $L\con_2$, 
we obtain two axioms and close the proof.}    

\end{center}



\mh (iii)
 The sequent $\Rightarrow \con p \lor\neg \con p$  is not derivable in \gtfm.   


\begin{center}

$\infer[R\lor]{\Rightarrow \con p \lor \neg \con p}{\Rightarrow \con p}$
\quad \quad  
$\infer[R\lor]{\Rightarrow \con p \lor \neg \con p}{\Rightarrow \neg \con p}$

\m\justifying{\mh It is clear from the constructions above that all possibilities for finding a proof have  been considered. 
No further inference can be made from the premises resulting from bottom-up applications of $R\lor$.}
\end{center}

\mh (iv)
 The sequent $\con   p  \Rightarrow   p$   is  not derivable in \gtfm.   

\m\mh {Consider the constructions below:}

\begin{center}
{ \small $\infer[L\con_2]{\con p \Rightarrow p}{\con p \Rightarrow p & \con p  \Rightarrow \neg p}$
\quad  \quad
 $\infer[L\con_1]{\con p \Rightarrow p}{\con p, p \Rightarrow p & \infer[L\con_1]{\con p, \neg p \Rightarrow p}{\con p, \neg  p, p \Rightarrow p &   \con p, \neg p, \neg p \Rightarrow    p}}$}

   \m  \justifying{\mh     
The construction on the left shows that a bottom-up application of $L\con_{2}$ yields a cognate sequent as the left premise, thereby stopping the search in that branch.
Note that any application of $L\con_{2}$ to a sequent of the form $\Gamma, \con A \Rightarrow A$ or $\Gamma, \con A \Rightarrow \neg A$ yields a cognate sequent as the left or right premise.  
Thus, the only remaining possibility is to apply $L\con_{1}$.
However, two successive bottom-up applications of $L\con_{1}$ also yield
cognate sequents, as the construction on the right shows. }
\end{center}

\vv\vv 




\mh We leave as an exercise to the reader to show that the sequents     
$p\land\neg p\Rightarrow q$ and  $\Rightarrow \con p \land  p \land\neg p$ are not derivable in \gtfm. 
 The non-derivability of the latter sequent shows that \gtfm\ is syntactically
consistent. Indeed, in \letfm\  a bottom particle can be defined by
$\bot \defi \con A \land A \land \neg A$, for any $A$. Hence, if $\Rightarrow \bot$ were
derivable, then so would be $\Rightarrow \con p \land p \land \neg p$, which is
impossible. Therefore, $\Rightarrow \bot$ is not derivable.

\end{exem}

The reader should have noticed how the proof method proceeds.
If the sequent is derivable, a choice of bottom-up rule applications that avoids producing cognate sequents will 
invariably end with axioms on every branch. 
If the sequent is not derivable, 
the proof search will eventually produce a cognate sequent, at which point the search stops. 
All possible bottom‑up rule applications must be examined to confirm that no derivation can be completed.

\begin{teom} \  \label{th.decid.gtfm}

\m\mh 
    The question of determining whether or not a sequent $\Gamma \Rightarrow C$ is derivable in \gtfm\ is decidable.

    \begin{proof}
 
      Our strategy will be to show that we can generate all  possible cut-free irredundant derivations of $\Gamma \Rightarrow C$ by constructing those derivations root first, choosing step by step a principal formula of the bottom-most sequent to analyze into its possible premises.

\m\mh 
By Proposition~\ref{prop.subf.prop.g3} (generalized subformula property) and Theorem~\ref{th.cut.gtfm} (admissibility of cut), any derivation of \(\Gamma \Rightarrow C\) can be assumed to be cut‑free and to contain only generalized subformulas of \(\Gamma\) and \(C\).

\m\mh Proposition~\ref{prop.irred.deriv} allows us to restrict the search to irredundant derivations; that is, we may stop extending a branch whenever a pair of subsequent cognate sequent appears.

\m\mh Since the set of generalized subformulas of \(\Gamma\) and \(C\)  
 is finite and  we may restrict to irredundant derivations (which avoid cognate sequents), only finitely many distinct sequents can appear in such a derivation. 
 
\m\mh  Moreover, each inference rule has a fixed number of premises, so there are only finitely many ways to apply a rule to a given sequent.

\m\mh Consequently, the bottom-up  proof search tree is finite: every branch terminates either with an axiom or with a sequent to which no further rule can be applied without introducing a cognate sequent.

\m\mh Thus, the number of possible irredundant, cut‑free derivations built from generalized subformulas is finite. If among these we find a derivation whose leaves are all axioms, then $\Gamma \Rightarrow C$  is derivable; otherwise, it is not.
    \end{proof} 
    \end{teom}

\section{What's next}   \label{sec.final.rem}

As future work, we plan to extend this proof-theoretic investigation to
other logics in the \lets\ family, by developing \textit{G3}-style sequent calculi
together with bottom-up decision procedures for other sentential systems, in
particular \letj\ and the six-valued logics \letfp\ and \letkp\ \cite{Coniglio.Rod.SL.2023}. 
In a second stage, we also intend to investigate the proof theory of their corresponding first-order extensions.

\bibliographystyle{plainnat}  
 \bibliography{library}

 \end{document}